\newtheorem{theorem}{Theorem}
\newtheorem{lemma}{Lemma}
\newtheorem{proposition}{Proposition}
\newtheorem{remark}{Remark}
\newcommand{\bx}{{\bm{x}}}
\newcommand{\by}{{\bm{y}}}
\newcommand{\differential}{{\rm{d}}}
\renewcommand{\det}{{\mathrm{det}}}
\newcommand{\vol}{{\mathrm{vol}}}
\title{\LARGE\textbf{
The Convex Geometry of Integrator Reach Sets}
}
\author{Shadi Haddad, and Abhishek Halder
\thanks{Shadi Haddad, and Abhishek Halder are with the Department of Applied Mathematics, University of California, Santa Cruz, CA 95064, USA,
        {\tt\small{\{shhaddad,ahalder\}@ucsc.edu}}%
}}
\begin{document}

\maketitle
\thispagestyle{empty}
\pagestyle{empty}

\begin{abstract}
We study the convex geometry of the forward reach sets for integrator dynamics in finite dimensions with bounded control. We derive closed-form expressions for the volume and the diameter (i.e., maximal width) of these sets in terms of the state space dimension, control bound, and time. These results are novel, and use convex analysis to give an analytical handle on the ``size" of the integrator reach set. Several concrete examples are provided to illustrate our results. We envision that the ideas presented here will motivate further theoretical and algorithmic development in reach set computation. 
\end{abstract}


\section{Introduction}
We consider the $d$-dimensional integrator dynamics
\begin{align}
\dot{\bm{x}} = \bm{A}\bm{x} + \bm{b}u, \quad\bm{x}\in\mathbb{R}^{d}, \quad u\in[-\mu,\mu],
\label{IntegratorDyn}	
\end{align}
with given $\mu>0$, $d = 2, 3, \hdots$, and
\begin{align}
\bm{A} := \left[\bm{0} \mid \bm{e}_{1} \mid \bm{e}_{2} \mid \hdots \mid \bm{e}_{d-1} \right], \quad \bm{b} := \bm{e}_{d},
\label{StateMatrices}	
\end{align}
where $\bm{0}$ denotes the $d\times 1$ column vector of zeros, and $\bm{e}_{i}$ is the $i$-th basis (column) vector in $\mathbb{R}^{d}$ for $i=1,\hdots,d$. Our intent in this paper is to study the geometry of the \emph{forward reach set} $\mathcal{R}\left(\mathcal{X}_{0},t\right)$ for (\ref{IntegratorDyn}) at time $t$, starting from a given compact convex set of initial conditions $\mathcal{X}_{0}\subset\mathbb{R}^{d}$, i.e.,
\begin{align}
\mathcal{R}\left(\mathcal{X}_{0},t\right) := \big\{\bm{x}(t)\in\mathbb{R}^{d} \mid \dot{\bm{x}} = &\bm{A}\bm{x} + \bm{b}u, \quad \bm{x}(0) \in\mathcal{X}_{0},\nonumber\\
&u\in[-\mu,\mu]\big\}.
\label{DefReachSet}	
\end{align}
In words, $\mathcal{R}\left(\mathcal{X}_{0},t\right)$ is the set of all states the controlled dynamics (\ref{IntegratorDyn}) can reach at time $t>0$, starting from the set $\mathcal{X}_{0}$ at $t=0$, with bounded control $u(t)\in[-\mu,\mu]$. Formally,
\begin{align}
\!\!\mathcal{R}\left(\mathcal{X}_{0},t\right)\! &= \exp(t\bm{A})\mathcal{X}_{0} \dotplus \!\!\int_{0}^{t}\!\!\!\exp\left((t-\tau)\bm{A}\right)\bm{b}\left[-\mu,\mu\right]\differential\tau\nonumber\\
&= \exp(t\bm{A})\mathcal{X}_{0} \dotplus \!\!\int_{0}^{t}\!\!\!\exp\left(s\bm{A}\right)\bm{b}\left[-\mu,\mu\right]\differential s,
\label{SetValuedIntegral}	
\end{align}
where $\dotplus$ denotes the Minkowski sum, and the set-valued Aumann integral \cite{aumann1965integrals} above is defined as follows. For any point-to-set function $F(\cdot)$, we define 
\begin{align}
\int_{0}^{t}F(s)\differential s := \lim_{\Delta\downarrow 0}\:\sum_{i=0}^{\lfloor\frac{t}{\Delta}\rfloor}\Delta F(i\Delta),
\label{DefIntegralOfSetValuedFn}
\end{align}
where the summation symbol $\Sigma$ stands for the Minkowski sum, and $\lfloor\cdot\rfloor$ is the floor operator; see e.g., \cite{varaiya2000reach}. We will often consider the special case of singleton initial set $\mathcal{X}_{0}\equiv\{\bm{x}_{0}\}$. 

It is straightforward to prove that $\mathcal{R}\left(\mathcal{X}_{0},t\right)$ is a compact convex subset of $\mathbb{R}^{d}$ for all $t>0$. Notice however, that the (space-time) \emph{forward reachable tube} 
\begin{align}
\overline{\mathcal{R}}\left(\mathcal{X}_{0},t\right) := \bigcup_{0\leq \tau \leq t} \mathcal{R}\left(\mathcal{X}_{0},\tau\right),
\label{DefReachableTube}	
\end{align}
need not be convex in $\mathbb{R}^{d} \times \mathbb{R}_{>0}$. Fig. \ref{FigDoubleIntegratorReachSet} illustrates this for the double integrator (i.e., $d=2$ case) with $\mathcal{X}_{0}\equiv\{\bm{x}_{0}\}$.

Our motivation behind studying the geometry of the integrator reach sets is twofold. 

\emph{First}, integrators are simple prototypical linear time invariant (LTI) systems that feature prominently in the systems-control literature on reach set computation (see e.g., \cite{maksarov1996state,KurzhanskiValyiBook}, \cite[Ch. 3,4]{varaiyabook},\cite{fainekos2009temporal}). Despite their ubiquity and pedagogical importance, very little is known about the specific geometry of the integrator reach sets. Existing results come in two flavors: rather generic statements (e.g., that these sets are compact and convex), and specific approximation algorithms (e.g., ellipsoidal \cite{KurzhanskiValyiBook,durieu2001multi,varaiyabook} and zonotopic \cite{althoff2007reachability} inner and outer-approximation). To minimize conservatism in numerically approximating the true reach set while preserving safety, one often considers minimum  volume outer-approximation via specific algorithms, see e.g., \cite{halder2018parameterized,halder2018smallest}. In such context, not knowing the volume or diameter of the true reach set hinders a quantitative assessment of whether one algorithm is better than other, in terms of approximating the true reach set. Consequently, one has to content with graphical or statistical assessments about the quality of approximation.      

\emph{Second}, many nonlinear control systems of practical interest, such as aerial and ground vehicles are differentially flat \cite{fliess1995flatness,murray1995,fliess1999lie}, meaning they can be put in a chain of integrator (i.e., Brunovsky canonical) form via a nonlinear change of coordinates. Thus, having an analytical handle on the integrator reach set (in feedback linearized coordinates, see \cite[Thm. 4.1]{nieuwstadt1998differential}) can help compute or approximate the reach set in the original state space. 

In this paper, we present basic convex geometry of the integrator reach sets in finite dimensions with bounded control, and derive closed-form expressions for the volume and the diameter of the same. From the authors' perspective, this paper makes fundamental systems-theoretic (not algorithmic) contribution. Our hope is that the ensuing development will be useful to systems-control researchers using reach set as a construct in applications such as motion planning, and will provide the foundation for development and benchmarking of algorithms.

This paper is structured as follows. Section \ref{SecSptFnCalculus} provides a brief review of the calculus of support functions, and deduces the support function for the integrator reach set. In Section \ref{SubsecVol}, a closed-form formula for the volume of the integrator reach set is derived. Along the way, we establish that the said reach set is a zonoid. In Section \ref{SubsecWidth}, we provide a closed-form formula for the diameter of the integrator reach set. Section \ref{SecConclusions} concludes the paper.


\begin{figure*}[t!]
    \centering
    \begin{subfigure}[t]{0.47\textwidth}
        \centering
        \includegraphics[width=\textwidth]{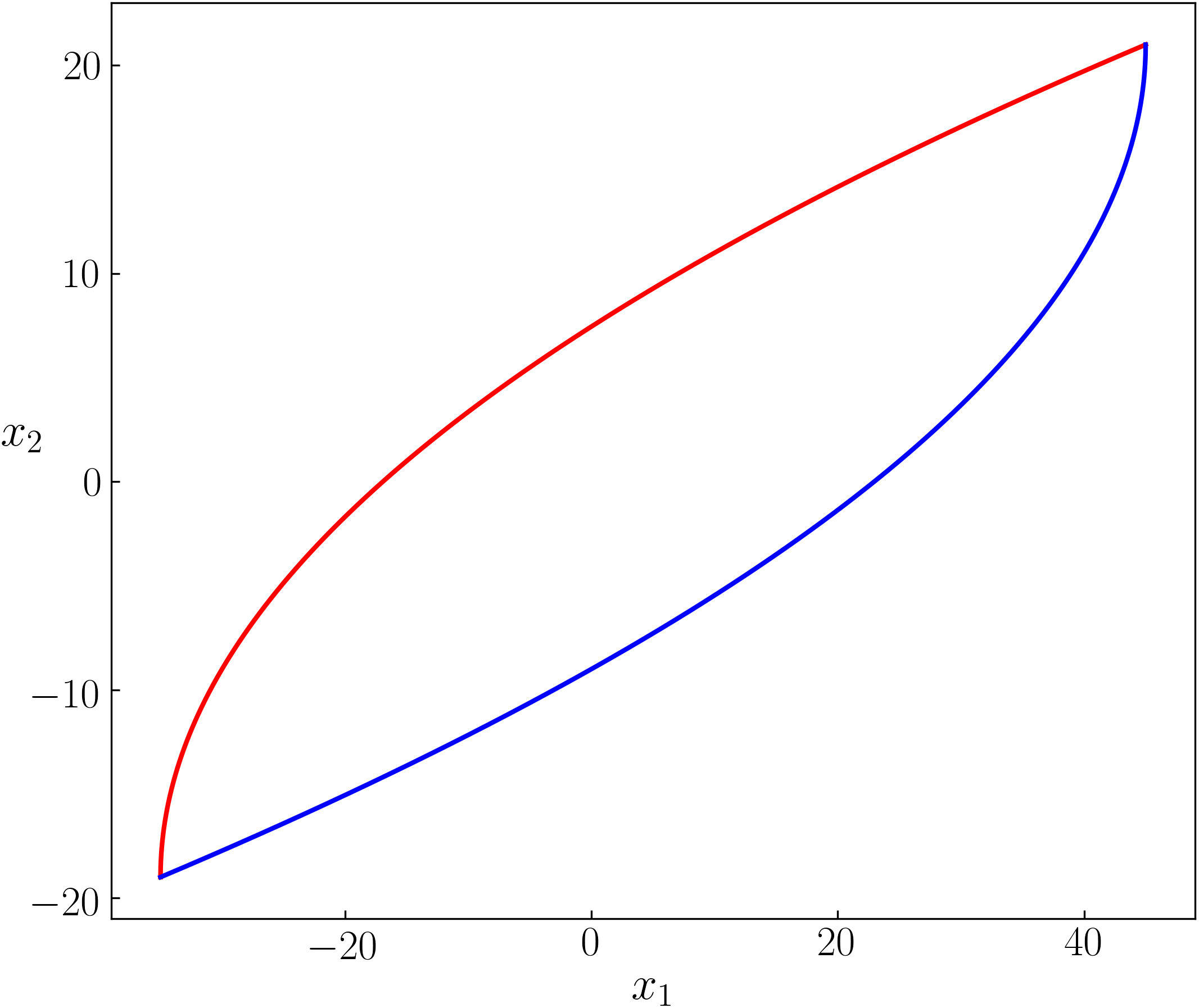}
        \caption{The reach set $\mathcal{R}\left(\{\bm{x}_{0}\},t\right)$ for the double integrator at $t=4$ is the compact convex set enclosed by the upper (\emph{red}) and the lower (\emph{blue}) curves shown.}
    \end{subfigure}%
    ~ 
    \begin{subfigure}[t]{0.5\textwidth}
        \centering
        \includegraphics[width=\textwidth]{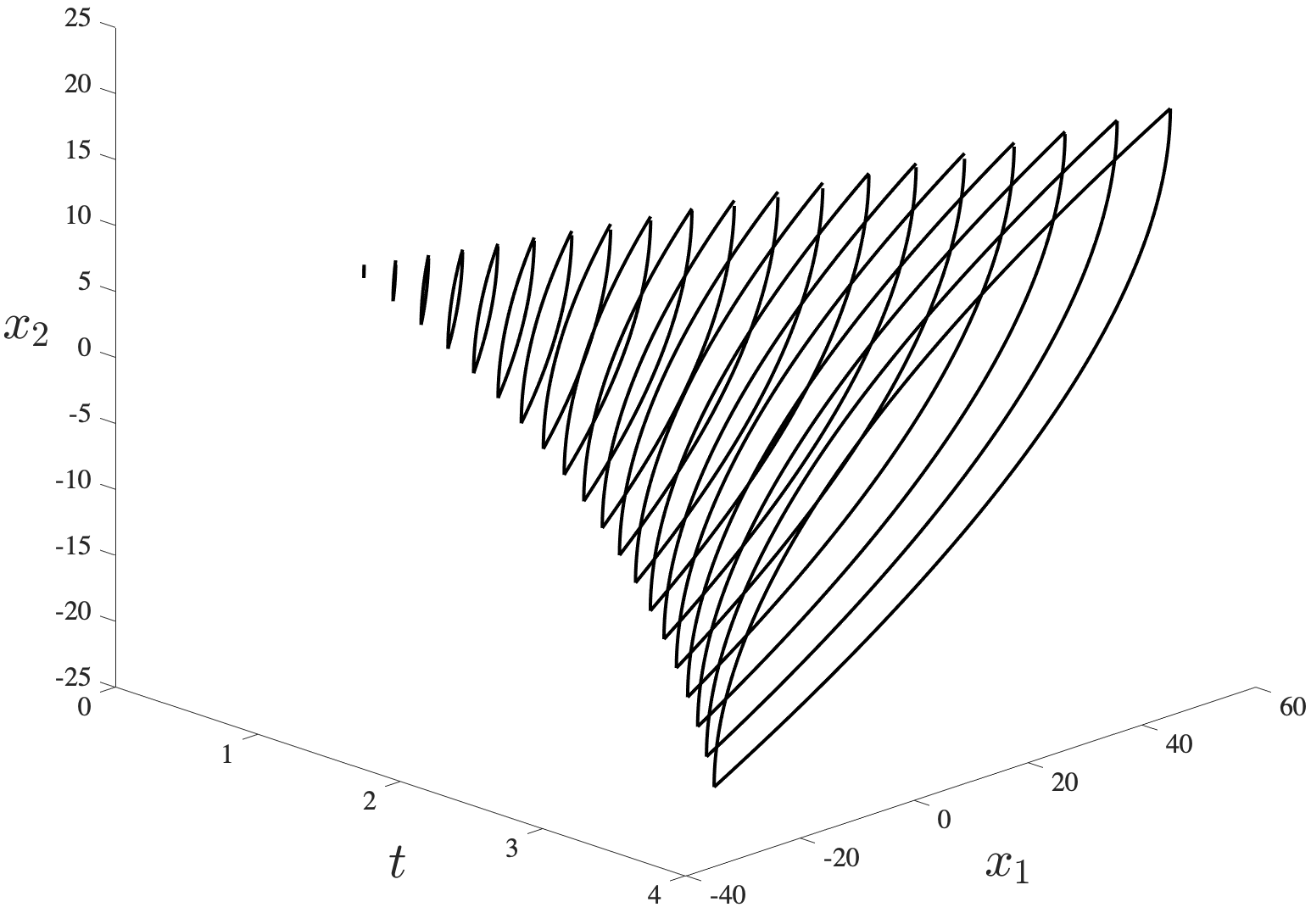}
        \caption{The wireframe plot of the reachable tube $\overline{\mathcal{R}}\left(\{\bm{x}_{0}\},t\right)$ for the double integrator with $t\in[0,4]$, shown here as the union of 20 reach sets. Here, the tube is non-convex even though its time slices (i.e., reach sets) are convex.}
    \end{subfigure}
\caption{\small{(a) The reach set (\ref{DefReachSet}), and (b) the reachable tube (\ref{DefReachableTube}) for the double integrator $(d=2)$ with $\bm{x}_{0} = \left(1,1\right)^{\top}$ and $\mu = 5$.}}
\vspace*{-0.1in}
\label{FigDoubleIntegratorReachSet}
\end{figure*}


\section{Support Function Calculus}\label{SecSptFnCalculus}
\subsection{Preliminaries}\label{SubsecSptFnPrelim}
A basic descriptor of a compact convex set $\mathcal{K} \subset \mathbb{R}^{d}$, is its support function $h_{\mathcal{K}}(\cdot)$ given by
\begin{align}
h_{\mathcal{K}}(\bm{y}) := \underset{\bx\in\mathcal{K}}{\sup}\:\{\langle\by,\bx\rangle \mid \bm{y}\in\mathbb{R}^{d}\},
\label{DefSptFn}	
\end{align}
where $\langle\cdot,\cdot\rangle$ denotes the standard Euclidean inner product. Geometrically, $h_{\mathcal{K}}(\bm{y})$ gives the signed distance of the supporting hyperplane of $\mathcal{K}$ with outer normal vector $\bm{y}$, measured from the origin. Since a compact convex set $\mathcal{K}$ is the intersection of its supporting halfspaces, the function $h_{\mathcal{K}}(\cdot)$ characterizes $\mathcal{K}$ by specifying the location of its supporting hyperplanes, parameterized by their outer normal vectors $\bm{y}\in\mathbb{R}^{d}$. A different way to view this characterization is the following \cite[Theorem 13.2]{rockafeller1970convex}: $h_{\mathcal{K}}(\cdot)$ is the Legendre-Fenchel conjugate of the indicator function of the set $\mathcal{K}$.

Several properties of the support function are well-known:
(i) $h_{\mathcal{K}}$ is a convex function of $\bm{y}\in\mathbb{R}^{d}$, and its epigraph is a convex cone.\\
(ii) $h_{\mathcal{K}}$ is positive homogeneous, i.e., $h_{\mathcal{K}}\left(\alpha\by\right)=\alpha h_{\mathcal{K}}\left(\by\right)$ for $\alpha>0$; also, $h_{\mathcal{K}}$ is sub-additive, i.e., $h_{\mathcal{K}}\left(\by + \bm{z}\right) \leq h_{\mathcal{K}}\left(\by\right) + h_{\mathcal{K}}\left(\bm{z}\right)$ for all $\by,\bm{z}\in\mathbb{R}^{d}$.\\
(iii) For $\mathcal{K}_{1},\mathcal{K}_{2}$ compact convex, $\mathcal{K}_{1}\subseteq\mathcal{K}_{2}$ if and only if $h_{\mathcal{K}_{1}}(\by) \leq h_{\mathcal{K}_{2}}(\by)$ for all $\by\in\mathbb{R}^{d}$.\\
(iv) For $\bm{M}\in\mathbb{R}^{d\times d}$ and $\bm{m},\by\in\mathbb{R}^{d}$, $h_{\bm{M}\mathcal{K}+\bm{m}}\left(\by\right) = \langle\by,\bm{m}\rangle + h_{\mathcal{K}}\left(\bm{M}^{\top}\by\right)$.\\
(v) For $\mathcal{K}_{1},\mathcal{K}_{2},\hdots,\mathcal{K}_{r}$ compact convex, and for all $\by\in\mathbb{R}^{d}$,
\[h_{\mathcal{K}_{1}\dotplus\hdots\dotplus\mathcal{K}_{r}}(\by) = h_{\mathcal{K}_{1}}(\by) + \hdots + h_{\mathcal{K}_{r}}(\by),\]
 \[h_{{\rm{ConvexHull}}\left(\mathcal{K}_{1}\cup\hdots\cup\mathcal{K}_{r}\right)}(\by) = \max\{h_{\mathcal{K}_{1}} (\by), \hdots,  h_{\mathcal{K}_{r}}(\by)\},\] 
 \[h_{\mathcal{K}_{1}\cap\hdots\cap\mathcal{K}_{r}}(\by) = \underset{\by_{1} + \hdots + \by_{r} =\by}{\inf}\{h_{\mathcal{K}_{1}}(\by_{1}) + \hdots + h_{\mathcal{K}_{r}}(\by_{r})\}.\]
 
Due to property (ii), a compact convex set in $\mathbb{R}^{d}$ is uniquely determined by its support function restricted to the unit sphere $\mathbb{S}^{d-1}$. We will also need the following result that builds on Lemma \ref{LemmaSetConvergenceSptFnConvergence} in Appendix \ref{AppConvergenceConvexSets}.
\begin{proposition}\label{PropositionSptFnIntegral}
(\textbf{Support function of the integral of a set-valued function}) Let $F(s)$ be a point-to-set function, and denote its support function as $h_{F(s)}\left(\bm{y}\right) \equiv h(s,\bm{y})$ for any $\bm{y}\in\mathbb{R}^{d}$. Then,
\[h_{\int_{0}^{t}F(s)\differential s}\left(\bm{y}\right) = \int_{0}^{t} h\left(s,\bm{y}\right)\differential s.\]	
\end{proposition}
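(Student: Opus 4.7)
The plan is to unwind the definition of the Aumann integral in (\ref{DefIntegralOfSetValuedFn}) and push the support function through, exploiting linearity of $h_{(\cdot)}(\bm{y})$ under Minkowski sums and positive scalings, and then pass to the limit.

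First, fix $\bm{y}\in\mathbb{R}^{d}$ and write the Minkowski Riemann sum
\[
S_{\Delta} := \sum_{i=0}^{\lfloor t/\Delta\rfloor}\Delta\,F(i\Delta),
\]
so that by (\ref{DefIntegralOfSetValuedFn}) the set $S_{\Delta}$ converges (in the Hausdorff sense) to $\int_{0}^{t}F(s)\,\differential s$ as $\Delta\downarrow 0$. I would apply property (v) (support function of a Minkowski sum is the sum of support functions) together with the positive-homogeneity part of property (ii) (so that $h_{\Delta F(i\Delta)}(\bm{y}) = \Delta\,h(i\Delta,\bm{y})$, valid since $\Delta>0$), to obtain the purely scalar identity
\[
h_{S_{\Delta}}(\bm{y}) \;=\; \sum_{i=0}^{\lfloor t/\Delta\rfloor}\Delta\,h(i\Delta,\bm{y}).
\]
The right-hand side is simply the Riemann sum for the scalar integral $\int_{0}^{t}h(s,\bm{y})\,\differential s$, and under the standing assumption that $s\mapsto F(s)$ is measurable/integrable enough for the Aumann integral to exist, $s\mapsto h(s,\bm{y})$ is integrable, so this Riemann sum converges to $\int_{0}^{t}h(s,\bm{y})\,\differential s$ as $\Delta\downarrow 0$.

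It then remains to justify that the left-hand side converges to the support function of the limiting set. This is exactly where Lemma \ref{LemmaSetConvergenceSptFnConvergence} from Appendix \ref{AppConvergenceConvexSets} is used: Hausdorff (or, more generally, Kuratowski) convergence $S_{\Delta}\to\int_{0}^{t}F(s)\,\differential s$ implies pointwise convergence $h_{S_{\Delta}}(\bm{y})\to h_{\int_{0}^{t}F(s)\,\differential s}(\bm{y})$ for every $\bm{y}\in\mathbb{R}^{d}$. Equating the two limits yields the claim.

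The main obstacle is the last step, namely the rigorous interchange of the limit with the support function; this is precisely what Lemma \ref{LemmaSetConvergenceSptFnConvergence} is tailored for, so once that lemma is invoked, the proof reduces to elementary Riemann-sum bookkeeping. A minor subtlety is ensuring that the values $F(i\Delta)$ are compact convex (so that properties (ii) and (v) apply termwise); this is part of the standing hypotheses on $F(\cdot)$ implicit in the Aumann-integral construction of (\ref{DefIntegralOfSetValuedFn}), and in our intended application $F(s)=\exp(s\bm{A})\bm{b}[-\mu,\mu]$ is a compact line segment for each $s$, so the hypothesis is trivially met.
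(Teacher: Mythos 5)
Your proposal is correct and follows essentially the same route as the paper's proof: unwind the Aumann integral via the Minkowski Riemann sum, use the Minkowski-sum additivity and positive homogeneity of the support function to reduce to a scalar Riemann sum, and invoke Lemma \ref{LemmaSetConvergenceSptFnConvergence} to interchange the Hausdorff limit with the support function. The only cosmetic difference is that you cite property (ii) for the scaling step where the paper cites property (iv); both yield $h_{\Delta F(i\Delta)}(\bm{y})=\Delta\,h(i\Delta,\bm{y})$.
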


\begin{proof}
For any $\bm{y}\in\mathbb{R}^{d}$, we have
\begin{align*}
h_{\int_{0}^{t}F(s)\differential s}\left(\bm{y}\right) &\stackrel{\text{(\ref{DefIntegralOfSetValuedFn})}}{=} h_{\lim_{\Delta\downarrow 0}\:\sum_{i=0}^{\lfloor\frac{t}{\Delta}\rfloor}\Delta F(i\Delta)}\left(\bm{y}\right)\nonumber\\
&= \underset{\bx\in\lim_{\Delta\downarrow 0}\:\sum_{i=0}^{\lfloor\frac{t}{\Delta}\rfloor}\Delta F(i\Delta)}{\sup}\big\langle\bm{y},\bx\big\rangle\nonumber\\
&\hspace*{-0.2in}\stackrel{\text{(Lemma \ref{LemmaSetConvergenceSptFnConvergence})}}{=} \displaystyle\lim_{\Delta\downarrow 0}\underset{\bx\in\sum_{i=0}^{\lfloor\frac{t}{\Delta}\rfloor}\Delta F(i\Delta)}{\sup}\big\langle\bm{y},\bx\big\rangle\nonumber\\
&= \displaystyle\lim_{\Delta\downarrow 0}h_{\sum_{i=0}^{\lfloor\frac{t}{\Delta}\rfloor}\Delta F(i\Delta)}\left(\bm{y}\right)\nonumber\\
&= \displaystyle\lim_{\Delta\downarrow 0}\displaystyle\sum_{i=0}^{\lfloor\frac{t}{\Delta}\rfloor}\Delta h\left(i\Delta,\bm{y}\right)	\nonumber\\
&= \int_{0}^{t} h\left(s,\bm{y}\right)\differential s,
\end{align*}
wherein the last but one line used the properties (iv)-(v) for the support function.	
\end{proof}
\begin{remark}
A special case of the above result for continuous-time LTI systems was derived in \cite[Proposition 2]{girard2008efficient}. Compared to the same, both the statement and proof of Proposition \ref{PropositionSptFnIntegral} are general (valid for any point-to-set function).
\end{remark} 

In the following, we will derive the support function of the set (\ref{SetValuedIntegral}), and show how geometric quantities of interest can be derived from the same.

\subsection{Support Function of the Integrator Reach Set}\label{SubsecSptFnIntegrator}
From (\ref{SetValuedIntegral}) and the properties (iv)-(v) in Section \ref{SubsecSptFnPrelim}, 
\begin{align}
h_{\mathcal{R}\left(\mathcal{X}_{0},t\right)}\left(\by\right) = h_{\mathcal{X}_{0}}\!\!\left(\exp\left(t\bm{A}^{\!\top}\right)\by\right) \!+\! h_{\int_{0}^{t}\!\exp\left(s\bm{A}\right)\bm{b}\left[-\mu,\mu\right]\differential s}\left(\by\right).
\label{SptFnIntegrator}	
\end{align}
Using Proposition \ref{PropositionSptFnIntegral} and property (iv), we simplify (\ref{SptFnIntegrator}) as
\begin{align}
h_{\mathcal{R}\left(\mathcal{X}_{0},t\right)}\left(\by\right) =& h_{\mathcal{X}_{0}}\left(\exp\left(t\bm{A}^{\top}\right)\by\right) \nonumber\\
&+\int_{0}^{t} h_{\bm{b}\left[-\mu,\mu\right]}\left(\exp\left(s\bm{A}^{\top}\right)\by\right)\differential s.
\label{SptFnIntegratorSimplified}	
\end{align}
Noting the structure of the state transition matrix from Appendix \ref{AppSTM} (equation (\ref{STM})), and that
\[h_{\bm{b}\left[-\mu,\mu\right]}\left(\by\right) = \underset{u\in\left[-\mu,\mu\right]}{\sup}\langle\by,\bm{b}u\rangle = \mu \lvert\langle\by,\bm{b}\rangle\rvert,\] 
we can rewrite (\ref{SptFnIntegratorSimplified}) as 
\begin{align}
h_{\mathcal{R}\left(\mathcal{X}_{0},t\right)}\left(\by\right) =&  
\underset{\bx_{0}\in\mathcal{X}_{0}}{\sup}\langle\by,\exp\left(t\bm{A}\right)\bx_{0}\rangle + \mu\int_{0}^{t}\lvert\langle\by,\bm{\xi}(s)\rangle\rvert\:\differential s,
\label{SptFnIntegratorFinal}	
\end{align}
where $\bm{\xi}(s)$ is the last column of the matrix $\exp(s\bm{A})$, i.e.,
\begin{align}
\bm{\xi}(s) := \begin{pmatrix}\frac{s^{d-1}}{(d-1)!} & \frac{s^{d-2}}{(d-2)!} & \hdots & s & 1\end{pmatrix}^{\top}.
\label{defxi}	
\end{align}
Expressions (\ref{SptFnIntegratorFinal})-(\ref{defxi}) describe the support function of the integrator reach set. 

\section{Functionals of the Integrator Reach Set}
We now show how the ideas presented thus far can be used to compute two functionals of the reach set that are of practical interest, viz. the \emph{volume}, and the \emph{diameter} or \emph{maximal width} of the set. Both these functionals measure the ``size" of the reach set. 

\subsection{Volume}\label{SubsecVol}
We denote the ($d$-dimensional) volume of the integrator reach set in $d$-dimensions, as $\vol\left(\mathcal{R}\left(\mathcal{X}_{0},t\right)\right)$. It can be written in terms of the support function:
\begin{align}
\vol\left(\mathcal{R}\left(\mathcal{X}_{0},t\right)\right) = \dfrac{1}{d}\!\displaystyle\int_{\mathbb{S}^{d-1}}\!\!h_{\mathcal{R}(\mathcal{X}_{0},t)}\left(\bm{\eta}\right)\:\differential S_{\mathcal{R}(\mathcal{X}_{0},t)}\left(\bm{\eta}\right),
\label{volviasptfn}	
\end{align}
where $\bm{\eta}\in\mathbb{S}^{d-1}$ (the Euclidean unit sphere imbedded in $\mathbb{R}^{d}$), and $\differential S_{\mathcal{R}(\mathcal{X}_{0},t)}$ denotes the differential of the surface area measure on $\mathcal{R}(\mathcal{X}_{0},t)$. Let $\differential S$ be the surface area measure on $\mathbb{S}^{d-1}$. Then, we can rewrite (\ref{volviasptfn}) as (see e.g., \cite{lutwak1993the})
\begin{align}
\vol\left(\mathcal{R}\left(\mathcal{X}_{0},t\right)\right) = \dfrac{1}{d}\!\displaystyle\int_{\mathbb{S}^{d-1}}\!\!h_{\mathcal{R}\left(\mathcal{X}_{0},t\right)}\left(\bm{\eta}\right)\dfrac{\differential S_{\mathcal{R}\left(\mathcal{X}_{0},t\right)}}{\differential S}(\bm{\eta})\:\differential S\left(\bm{\eta}\right),
\label{volRadonNikodym}	
\end{align}
where the Radon-Nikodym derivative $\differential S_{\mathcal{R}}/\differential S : \mathbb{S}^{d-1} \mapsto \mathbb{R}$. However, computing (\ref{volRadonNikodym}) with (\ref{SptFnIntegratorFinal}) seems unwieldy as we lack analytical handle on the surface measure of the integrator reach set.

To pursue an alternative strategy for volume computation, we uniformly discretize the interval $[0,t]$ into $n$ subintervals
\[\bigg[\frac{(i-1)t}{n}, \frac{it}{n}\bigg), \quad i=1, 2, \hdots, n,\]
with $(n+1)$ breakpoints $\{t_{i}\}_{i=0}^{n}$, where $t_{i}:=it/n$ for $i=0,1,\hdots,n$. We also consider singleton $\mathcal{X}_{0}\equiv\{\bx_{0}\}$. Then, from (\ref{SetValuedIntegral}), we have
\begin{align*}
\vol\left(\mathcal{R}\left(\{\bx_{0}\},t\right)\right) &= \vol\left(\int_{0}^{t}\!\!\!\exp\left(s\bm{A}\right)\bm{b}\left[-\mu,\mu\right]\differential s\right)\nonumber\\
&= \vol\left(\displaystyle\lim_{n\rightarrow\infty} \displaystyle\sum_{i=0}^{n}\frac{t}{n}\exp\left(t_{i}\bm{A}\right)\bm{b}\left[-\mu,\mu\right]\right),
\end{align*}
where the last line used the definition of the set-valued integral (\ref{DefIntegralOfSetValuedFn}), and that each subinterval is of length $t/n$. We simplify the above expression by taking the scaling factor $\mu t/n$ outside the Minkowski sum, interchanging the limit and $\vol$ (via Lemma \ref{LemmaSetConvergenceVolConvergence}), and using the homogeneity of the $\vol(\cdot)$ operator, to obtain
\begin{align}
\vol\left(\mathcal{R}\left(\{\bx_{0}\},t\right)\right) = \!\displaystyle\lim_{n\rightarrow\infty}\!\left(\dfrac{\mu t}{n}\right)^{\!\!d}\!\!\vol\left(\displaystyle\sum_{i=0}^{n}\exp\left(t_{i}\bm{A}\right)\bm{b}\left[-1,1\right]\right).
\label{vol}	
\end{align}
At this point, we notice that the set
\begin{align}
\displaystyle\sum_{i=0}^{n}\exp\left(t_{i}\bm{A}\right)\bm{b}\left[-1,1\right]
\label{DefSet}	
\end{align}
is a Minkowski sum of $n+1$ intervals, each interval being rotated and scaled in $\mathbb{R}^{d}$ via different linear transformations $\exp(t_{i}\bm{A})$, $i=0,1,\hdots,n$. To proceed further, we recall few facts about zonotopes \cite{bolker1969class, mcmullen1971on,shephard1974combinatorial}.

\subsubsection{Zonotopes and zonoids}
A $d$-dimensional \emph{zonotope} $\mathcal{Z}_{n}$ is defined as a Minkowski sum of $n$ line segments:
\begin{align}
\!\!\!\!\mathcal{Z}_{n} \!:= \!\bigg\{\!\sum_{j=1}^{n}\!\gamma_{j}\bm{v}_{j} \!\mid\! \gamma_{j}\in[-1,1], \bm{v}_{j}\in\mathbb{R}^{d}, j=1,\hdots,n\bigg\},
\label{DefZonotope}	
\end{align}
where the vectors $\{\bm{v}_{j}\}_{j=1}^{n}$ are called the ``generators" of $\mathcal{Z}_{n}$. To make this explicit, one often writes 
\[\mathcal{Z}_{n}\equiv\mathcal{Z}_{n}\left(\{\bm{v}_{j}\}_{j=1}^{n}\right) \subset \mathbb{R}^{d}.\] 
Equivalently, (\ref{DefZonotope}) can be seen as an affine transformation of the unit cube in $\mathbb{R}^{d}$. From Section \ref{SubsecSptFnPrelim}, the support function of (\ref{DefZonotope}) is 
\begin{align}
h_{\mathcal{Z}_{n}}(\bm{y}) = \displaystyle\sum_{j=1}^{n}\lvert\langle\bm{y},\bm{v}_{j}\rangle\rvert, \quad \bm{y}\in\mathbb{R}^{d}.
\label{sptfnzonotope}	
\end{align}
Conversely, a set $\mathcal{Z}_{n}$ with support function of the form (\ref{sptfnzonotope}) must be a zonotope \cite[p. 297]{schneider1983zonoids}. See \cite{witsenhausen1968} for a support function inequality characterization of zonotopes. The limiting (where the limit is w.r.t. the Hausdorff metric, see Appendix \ref{AppConvergenceConvexSets}) compact convex set $\mathcal{Z}_{\infty}:= \lim_{n\rightarrow\infty}\mathcal{Z}_{n}$ is termed as the ``zonoid" \cite{bolker1969class,schneider1983zonoids}.

The following formula for the $d$-dimensional volume of (\ref{DefZonotope}) appears in \cite[eqn. (57)]{shephard1974combinatorial}, who attributes it to \cite{mcmullen1971on}:
\begin{align}
\vol\left(\mathcal{Z}_{n}\right) = 2^{d} \!\!\!\!\displaystyle\sum_{1 \leq j_{1} < j_{2} < \hdots < j_{d} \leq n} \!\!\lvert \det\left(\bm{v}_{j_{1}} \vert \bm{v}_{j_{2}} \vert \hdots \vert \bm{v}_{j_{d}}\right) \rvert,
\label{VolZonotope}	
\end{align}
where the summands are (non-negative) determinants of the $d\times d$ matrices, as shown. The formula (\ref{VolZonotope}) also appears in \cite[exercise 7.19]{ziegler2012lectures}, and can be derived by decomposing $\mathcal{Z}_{n}$ into parallelopipeds \cite[Fig. 5]{shephard1974combinatorial}, whose volumes are given by the summand determinants.

In our context, the aforesaid facts have two immediate consequences, summarized below.
\begin{proposition}\label{PropositionZonotopeZonoid}
(i) Recall that 
\[\bm{\xi}\left(t_{i}\right) = \bm{\xi}\left(it/n\right) \in\mathbb{R}^{d},\quad i=0,1,\hdots,n,\] 
where $\bm{\xi}(\cdot)$ is given by (\ref{defxi}). The set (\ref{DefSet}) is a zonotope $\mathcal{Z}_{n+1}\left(\{\bm{\xi}(t_{i})\}_{i=0}^{n}\right)$	 with support function
\[h_{\mathcal{Z}_{n+1}\left(\{\bm{\xi}(t_{i})\}_{i=0}^{n}\right)}(\bm{y}) = \displaystyle\sum_{i=0}^{n}\lvert\langle\bm{y},\bm{\xi}(t_{i})\rangle\rvert, \quad \bm{y}\in\mathbb{R}^{d}.\]
(ii) The limiting compact convex set $\int_{0}^{t}\exp(s\bm{A})\bm{b}[-1,1]\differential s$ is a zonoid.
\end{proposition}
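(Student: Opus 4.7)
\textbf{Proof proposal for Proposition \ref{PropositionZonotopeZonoid}.}

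For part (i), the plan is to reduce each summand in (\ref{DefSet}) to a scaled line segment whose direction is exactly $\bm{\xi}(t_{i})$, and then invoke the definition of a zonotope as the Minkowski sum of such segments. The key identity I will establish is
\[
\exp(s\bm{A})\,\bm{b} \;=\; \exp(s\bm{A})\,\bm{e}_{d} \;=\; \bm{\xi}(s),
\]
which holds because $\bm{b}=\bm{e}_{d}$ picks out the last column of $\exp(s\bm{A})$, and by the explicit form of the state transition matrix recalled in Appendix \ref{AppSTM}, that last column is precisely the vector $(s^{d-1}/(d-1)!,\,s^{d-2}/(d-2)!,\,\dots,\,s,\,1)^{\top}$ defined in (\ref{defxi}). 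Consequently, for each $i\in\{0,1,\dots,n\}$,
\[
\exp(t_{i}\bm{A})\,\bm{b}\,[-1,1] \;=\; \bm{\xi}(t_{i})\,[-1,1] \;=\; \{\gamma\bm{\xi}(t_{i}) : \gamma\in[-1,1]\},
\]
a line segment in $\mathbb{R}^{d}$ with generator $\bm{\xi}(t_{i})$. The Minkowski sum (\ref{DefSet}) is then, by definition (\ref{DefZonotope}) with $n\leftarrow n+1$ and generators $\bm{v}_{i+1} := \bm{\xi}(t_{i})$, precisely the zonotope $\mathcal{Z}_{n+1}\bigl(\{\bm{\xi}(t_{i})\}_{i=0}^{n}\bigr)$. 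The support function formula is then an immediate specialization of (\ref{sptfnzonotope}).

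For part (ii), I would argue that the Aumann integral $\int_{0}^{t}\exp(s\bm{A})\bm{b}[-1,1]\,\differential s$ is by construction the Hausdorff limit of a sequence of zonotopes, and therefore a zonoid by definition. Concretely, taking $\Delta = t/n$ in (\ref{DefIntegralOfSetValuedFn}) with $F(s) = \exp(s\bm{A})\bm{b}[-1,1]$, each partial sum
\[
\sum_{i=0}^{n}\frac{t}{n}\exp(t_{i}\bm{A})\bm{b}[-1,1] \;=\; \sum_{i=0}^{n}\Bigl(\tfrac{t}{n}\bm{\xi}(t_{i})\Bigr)[-1,1]
\]
is, by part (i), a zonotope with generators $\{(t/n)\bm{\xi}(t_{i})\}_{i=0}^{n}$. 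Since the Aumann integral is defined as the $\Delta\downarrow 0$ limit in the Hausdorff metric (cf.\ Appendix \ref{AppConvergenceConvexSets}), the integral is the Hausdorff limit of a sequence of zonotopes, i.e., a zonoid in the sense of \cite{bolker1969class,schneider1983zonoids}.

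The only real subtlety I anticipate is making the convergence in (ii) rigorous, since the generator set itself changes with $n$ (both in cardinality and in the scaling factor $t/n$). I would handle this by appealing to the convergence lemma in Appendix \ref{AppConvergenceConvexSets}, which gives Hausdorff convergence of the Riemann-like Minkowski sums to the Aumann integral whenever the integrand $s\mapsto \exp(s\bm{A})\bm{b}[-1,1]$ is continuous in the Hausdorff sense; continuity is clear here because $s\mapsto\bm{\xi}(s)$ is polynomial, hence continuous. Part (i) is essentially algebraic and should go through without obstacle.
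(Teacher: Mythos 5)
Your proposal is correct and follows essentially the same route the paper takes: the paper treats Proposition \ref{PropositionZonotopeZonoid} as an immediate consequence of the observation that $\exp(t_{i}\bm{A})\bm{b}=\bm{\xi}(t_{i})$ (the last column of the state transition matrix in (\ref{STM})), the definition (\ref{DefZonotope}) with support function (\ref{sptfnzonotope}), and the definition of a zonoid as a Hausdorff limit of zonotopes. Your added care in part (ii) about justifying the Hausdorff convergence (e.g., via Lemma \ref{LemmaSetConvergenceSptFnConvergence} and the uniform convergence of the Riemann sums $\sum_{i}(t/n)\lvert\langle\bm{y},\bm{\xi}(t_{i})\rangle\rvert$ to $\int_{0}^{t}\lvert\langle\bm{y},\bm{\xi}(s)\rangle\rvert\,\differential s$ on $\mathbb{S}^{d-1}$) is a welcome refinement but not a departure from the paper's argument.
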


\subsubsection{Back to volume computation}
Thanks to Proposition \ref{PropositionZonotopeZonoid}, we can use (\ref{VolZonotope}) to rewrite the right-hand-side (RHS) of (\ref{vol}) as
\begin{align}
\!\!\!\!\left(2\mu t\right)^{d}\displaystyle\lim_{n\rightarrow\infty}\dfrac{1}{n^{d}} \!\displaystyle\sum_{0 \leq i_{1} < i_{2} < \hdots < i_{d} \leq n} \!\!\lvert \det\left(\bm{\xi}(t_{i_{1}}) \vert \bm{\xi}(t_{i_{2}}) \vert \hdots \vert \bm{\xi}(t_{i_{d}})\right) \rvert.
\label{SimplifyVolRHS}	
\end{align}
This leads to the following result (proof in Appendix \ref{AppProofVolThm}).
\begin{theorem}\label{ThmVolIntegratorReachSet} 
(\textbf{Volume of the integrator reach set})
Given controlled dynamics (\ref{IntegratorDyn})-(\ref{StateMatrices}), and a fixed $\bm{x}_{0}\in\mathbb{R}^{d}$, let $\mathcal{X}_{0} \equiv \{\bm{x}_{0}\}$. At time $t>0$, the volume of the reach set (\ref{DefReachSet}) is
\begin{align}
\vol\left(\mathcal{R}\left(\{\bx_{0}\},t\right)\right) = \dfrac{(2\mu)^{d}t^{d(d+1)/2}}{\displaystyle\prod_{k=1}^{d-1}k!}\displaystyle\lim_{n\rightarrow\infty}\displaystyle\frac{1}{n^{d(d+1)/2}}\times\nonumber\\
\displaystyle\sum_{0 \leq i_{1} < i_{2} < \hdots < i_{d} \leq n} \quad\displaystyle\prod_{1 \leq \alpha < \beta \leq d} \left(i_{\beta} - i_{\alpha}\right).
\label{FinalVolFormula}	
\end{align}	
\end{theorem}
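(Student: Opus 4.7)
The plan is to start from expression (\ref{SimplifyVolRHS}), which has already reduced the volume to a limit of a sum of absolute determinants built from the column vectors $\bm{\xi}(t_{i})$ with $t_{i} = it/n$. All the remaining work is to recognize that each such determinant is a scaled Vandermonde and to carry out the bookkeeping of the factorials and the factors of $t/n$.

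First I would look at a single summand $\lvert\det(\bm{\xi}(t_{i_{1}})\mid\cdots\mid\bm{\xi}(t_{i_{d}}))\rvert$. Reading off $\bm{\xi}$ from (\ref{defxi}), the $k$-th row of this $d\times d$ matrix (counting from the top) is
\[
\Bigl(\tfrac{t_{i_{1}}^{d-k}}{(d-k)!},\tfrac{t_{i_{2}}^{d-k}}{(d-k)!},\ldots,\tfrac{t_{i_{d}}^{d-k}}{(d-k)!}\Bigr).
\]
Pulling the common factor $1/(d-k)!$ out of row $k$ for each $k=1,\ldots,d$ leaves a matrix whose $(k,\ell)$ entry is $t_{i_{\ell}}^{d-k}$. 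After reversing the row order (which only changes a sign, harmless under $\lvert\cdot\rvert$), this is the standard Vandermonde matrix in $t_{i_{1}},\ldots,t_{i_{d}}$. Its determinant is $\prod_{1\le\alpha<\beta\le d}(t_{i_{\beta}}-t_{i_{\alpha}})$, and the prefactor is $\prod_{k=1}^{d}1/(d-k)! = 1/\prod_{k=1}^{d-1}k!$ (using $0!=1$).

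Next I would substitute $t_{i_{\alpha}}=i_{\alpha}t/n$. Since $i_{1}<i_{2}<\cdots<i_{d}$, every Vandermonde factor is positive, so taking the absolute value is cosmetic, and
\[
\lvert\det(\bm{\xi}(t_{i_{1}})\mid\cdots\mid\bm{\xi}(t_{i_{d}}))\rvert
= \frac{1}{\prod_{k=1}^{d-1}k!}\Bigl(\frac{t}{n}\Bigr)^{\!\binom{d}{2}}\!\!\prod_{1\le\alpha<\beta\le d}(i_{\beta}-i_{\alpha}).
\]
Inserting this into (\ref{SimplifyVolRHS}) pulls the combinatorial product outside the determinant, yielding a prefactor $(2\mu t)^{d}\cdot(t/n)^{d(d-1)/2}/\prod_{k=1}^{d-1}k!$ in front of the sum and the limit.

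Finally I would collect exponents: the $t$'s combine to $t^{d+d(d-1)/2}=t^{d(d+1)/2}$, and the $n$'s combine as $n^{-d}\cdot n^{-d(d-1)/2}=n^{-d(d+1)/2}$. This reproduces (\ref{FinalVolFormula}) exactly. There is no real obstacle here; the only thing to be careful about is the row-factorial bookkeeping (making sure the product $\prod_{k=1}^{d-1}k!$ matches after the row reversal) and the exponent arithmetic that merges $(t/n)^{\binom{d}{2}}$ with the ambient $(2\mu t)^{d}/n^{d}$.
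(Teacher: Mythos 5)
Your proposal is correct and follows essentially the same route as the paper's own proof in Appendix C: factor the row factorials out of the summand determinant, recognize the remaining matrix (after row reversal, harmless under the absolute value) as a Vandermonde matrix, substitute $t_{i_{\alpha}}=i_{\alpha}t/n$, and collect the powers of $t$ and $n$ to obtain the stated prefactor and limit. The bookkeeping $\prod_{k=1}^{d}1/(d-k)!=1/\prod_{k=1}^{d-1}k!$ and the exponent arithmetic $d+d(d-1)/2=d(d+1)/2$ both check out.
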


\begin{remark}
Since the sum
\[\displaystyle\sum_{0 \leq i_{1} < i_{2} < \hdots < i_{d} \leq n} \quad\displaystyle\prod_{1 \leq \alpha < \beta \leq d} \left(i_{\beta} - i_{\alpha}\right)\]
must return a polynomial in $n$ of degree $d(d+1)/2$, hence the limit in (\ref{FinalVolFormula}) will simply extract the leading coefficient of this polynomial, i.e., the coefficient of $n^{d(d+1)/2}$. A corollary is that the said limit (which is a function of $d$) is well defined.	
\end{remark}

\begin{remark}
If the set of initial conditions $\mathcal{X}_{0}$ is not singleton, then computing the volume of the reach set requires computing volume of a Minkowski sum: $\vol\left(\mathcal{R}\left(\mathcal{X}_{0},t\right)\right) = \vol\left(\mathcal{R}_{0} \dotplus \mathcal{R}_{t}\right)$, where $\mathcal{R}_{0}:=\exp(t\bm{A})\mathcal{X}_{0}$, and $\mathcal{R}_{t} := \int_{0}^{t}\exp\left(s\bm{A}\right)\bm{b}\left[-\mu,\mu\right]\differential s$. 
\end{remark}

To illustrate the use of (\ref{FinalVolFormula}), consider $d=2$, i.e., the case of a double integrator for which the shape of the reach set is as in Fig. \ref{FigDoubleIntegratorReachSet}(a). From (\ref{FinalVolFormula}), its area is
\begin{align}
4\mu^{2}t^{3}\displaystyle\lim_{n\rightarrow\infty}\displaystyle\frac{1}{n^{3}}\displaystyle\sum_{i=0}^{n}\sum_{j=i+1}^{n}(j-i), 
\label{VolDoubleIntegrator}	
\end{align}
wherein we renamed the indices $(i_{1},i_{2})\mapsto(i,j)$. Straightforward calculation gives
\[\displaystyle\sum_{i=0}^{n}\sum_{j=i+1}^{n}(j-i) = \frac{1}{6}n\left(n+1\right)\left(n+2\right);\] 
thus the limit in (\ref{VolDoubleIntegrator}) evaluates to $1/6$, and hence the area equals $2\mu^{2}t^{3}/3$. We note that for the double integrator, it is possible to derive the area formula $2\mu^{2}t^{3}/3$ by first deriving the equations of the bounding curves \cite[p. 111]{varaiyabook} (\emph{red and blue} curves in Fig. \ref{FigDoubleIntegratorReachSet}(a)), and then computing the area enclosed by the two \cite[Appendix A]{gianfortone2018thesis}. However, it is difficult to generalize this approach to higher dimensions.  

As another example, consider $d=3$ (the triple integrator). Then (\ref{FinalVolFormula}) reduces to
\begin{align}
4\mu^{3}t^{6}\displaystyle\lim_{n\rightarrow\infty}\displaystyle\frac{1}{n^{6}}\displaystyle\sum_{i=0}^{n}\sum_{j=i+1}^{n}\sum_{k=j+1}^{n}(k-j)(k-i)(j-i), 
\label{VolTripleIntegrator}	
\end{align}
where again we renamed the indices $(i_{1},i_{2},i_{3})\mapsto(i,j,k)$. Noting that 
\begin{align*}
\displaystyle\sum_{i=0}^{n}\displaystyle\sum_{j=i+1}^{n}\displaystyle\sum_{k=j+1}^{n}(k-j)(j-i)(k-i) \\
= \frac{1}{180}(n-1)n(n+1)^{2}(n+2)(n+3),	
\end{align*}
we find that the limit in (\ref{VolTripleIntegrator}) evaluates to $1/180$, and therefore, the volume of the triple integrator reach set is $\mu^{3}t^{6}/45$.

Similar calculation shows that the (4-dimensional) volume of the quadruple integrator (i.e., $d=4$) is $\mu^{4}t^{10}/18900$.

\subsection{Width}\label{SubsecWidth}
The \emph{width} \cite[p. 42]{schneider2014book} of the reach set $\mathcal{R}\left(\mathcal{X}_{0},t\right)$ is 
\begin{align}
w_{\mathcal{R}\left(\mathcal{X}_{0},t\right)}(\bm{\eta}) := h_{\mathcal{R}(\mathcal{X}_{0},t)}\left(\bm{\eta}\right) + h_{\mathcal{R}(\mathcal{X}_{0},t)}\left(-\bm{\eta}\right),
\label{DefWidth}	
\end{align}
where $\bm{\eta}\in\mathbb{S}^{d-1}$, and $h_{\mathcal{R}(\mathcal{X}_{0},t)}\left(\cdot\right)$ is given by (\ref{SptFnIntegratorFinal}). That is, (\ref{DefWidth}) gives the width of the reach set in the direction $\bm{\eta}$. 

For $\mathcal{X}_{0} \equiv \{\bm{x}_{0}\}$ (a singleton set), combining (\ref{SptFnIntegratorFinal}) and (\ref{DefWidth}), we have
\begin{align}
w_{\mathcal{R}\left(\{\bm{x}_{0}\},t\right)}(\bm{\eta}) &= \mu\int_{0}^{t} \bigg\{\lvert\langle\bm{\eta},\bm{\xi}(s)\rangle\rvert + \lvert\langle-\bm{\eta},\bm{\xi}(s)\rangle\rvert\bigg\}\:\differential s \nonumber\\
	&= 2\mu\int_{0}^{t} \lvert\langle\bm{\eta},\bm{\xi}(s)\rangle\rvert\:\differential s,\label{widthformula}
\end{align}
where the last equality follows from the fact that $\bm{\xi}(s)$ in (\ref{defxi}) is component-wise non-negative for all $0 \leq s \leq t$. Fig. \ref{FigWidthReachSet} shows how the width of the integrator reach set for $d=2$ varies over $\theta\in\mathbb{S}^{1}$ (i.e., $\bm{\eta}\equiv (\cos\theta, \sin\theta)^{\top}$ in this case).  

\begin{figure}[t]
        \centering
        \includegraphics[width=\linewidth]{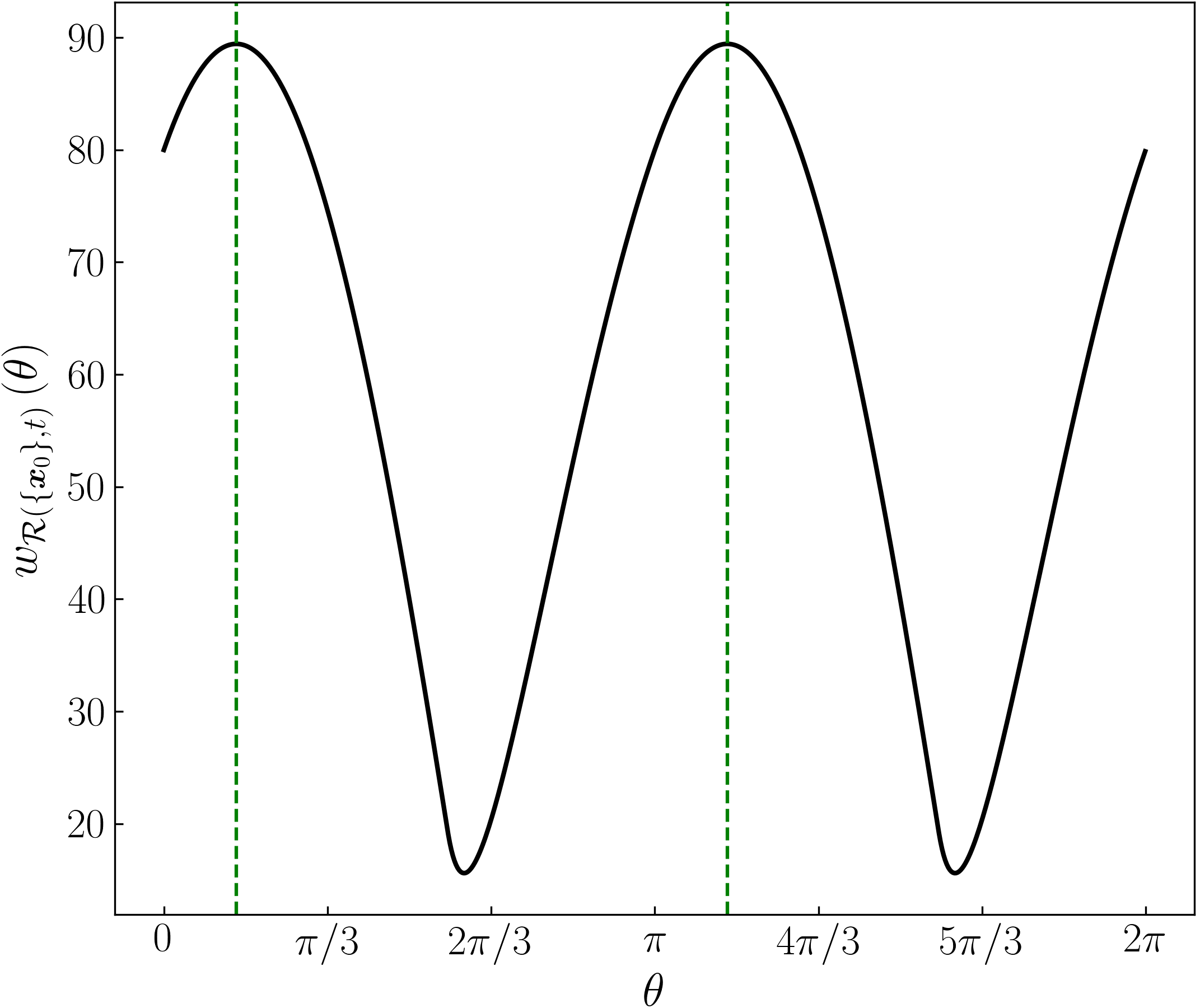}
        \caption{The solid line shows the width of the double integrator reach set $\mathcal{R}\left(\{\bm{x}_{0}\},t\right)$ shown in Fig. \ref{FigDoubleIntegratorReachSet}(a), as function of $\theta\in\mathbb{S}^{1}$ given by (\ref{widthformula}). As in Fig. \ref{FigDoubleIntegratorReachSet}(a), here $\bm{x}_{0} = \left(1,1\right)^{\top}$, $\mu = 5$, $t=4$. The dashed vertical lines show the location of the maximizers $\theta_{r}^{\max} = r\pi + \arctan(2/t)$, $r=0,1$, for the width, i.e., the directions defining the diameter of the reach set (see Theorem \ref{ThmDiamIntegratorReachSet} and the discussion thereafter).}
\vspace*{-0.1in}
\label{FigWidthReachSet}
\end{figure}

The \emph{diameter} of the reach set is the maximal width:
\begin{align}
{\rm{diam}}\left(\mathcal{R}\left(\mathcal{X}_{0},t\right)\right) := \underset{\bm{\eta}\in\mathbb{S}^{d-1}}{\max}\:w_{\mathcal{R}\left(\mathcal{X}_{0},t\right)}(\bm{\eta}).
\label{DefDiam}	
\end{align}
It is clear from the support function property (ii) in Section \ref{SubsecSptFnPrelim} that both the width and the diameter are Minkowski sub-additive, i.e., $w_{\mathcal{K}_{1}\dotplus\mathcal{K}_{2}}(\cdot) \leq w_{\mathcal{K}_{1}}(\cdot) + w_{\mathcal{K}_{2}}(\cdot)$, and ${\rm{diam}}\left(\mathcal{K}_{1}\dotplus\mathcal{K}_{2}\right) \leq {\rm{diam}}\left(\mathcal{K}_{1}\right) + {\rm{diam}}\left(\mathcal{K}_{2}\right)$ for compact convex $\mathcal{K}_{1},\mathcal{K}_{2}$.

Notice that the integrand in (\ref{widthformula}) is a convex function in $\bm{\eta}$ for each $s\in[0,t]$, and therefore $w_{\mathcal{R}\left(\{\bm{x}_{0}\},t\right)}(\bm{\eta})$ is convex in $\bm{\eta}$; see \cite[p. 79]{boydbook}. So, computing (\ref{DefDiam}) is a concave problem:
\begin{align}
2\mu\:\underset{\bm{\eta}^{\top}\bm{\eta}=1}{\max}\:\displaystyle\int_{0}^{t}\lvert\langle\bm{\eta},\bm{\xi}(s)\rangle\rvert\:\differential s.
\label{ConcaveProblemDiameterOfReachSet}	
\end{align}
We have the following result (proof in Appendix \ref{AppProofDiamThm}).
\begin{theorem}\label{ThmDiamIntegratorReachSet}
(\textbf{Diameter of the integrator reach set})
Given controlled dynamics (\ref{IntegratorDyn})-(\ref{StateMatrices}), and a fixed $\bm{x}_{0}\in\mathbb{R}^{d}$, let $\mathcal{X}_{0} \equiv \{\bm{x}_{0}\}$, and the vector $\bm{\zeta}(t) := \int_{0}^{t}\bm{\xi}(s)\differential s$. At time $t>0$, the diameter (i.e., maximal width) of the reach set (\ref{DefReachSet}) is
\begin{subequations}
\begin{align}
{\rm{diam}}\left(\mathcal{R}\left(\{\bx_{0}\},t\right)\right) &= 2\mu \parallel \bm{\zeta}(t) \parallel_{2} \label{compactformDiamFormula}\\
	&= 2\mu \bigg\{\!\displaystyle\sum_{j=1}^{d}\left(\dfrac{t^{j}}{j!}\right)^{\!\!2}\!\bigg\}^{\!\!\frac{1}{2}}.\label{expandedformDiamFormula}
\end{align}
\label{FinalDiamFormula}		
\end{subequations}		
\end{theorem}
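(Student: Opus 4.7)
The plan is to attack the maximization in (\ref{ConcaveProblemDiameterOfReachSet}) directly by exploiting the component-wise non-negativity of $\bm{\xi}(s)$ on $[0,t]$, which already trivialized the width formula (\ref{widthformula}). I expect the whole proof to be short, with the main task being to upper-bound the objective tightly and then exhibit a maximizer that attains the bound.

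First I would push the absolute value inside: for any $\bm{\eta}\in\mathbb{S}^{d-1}$ and every $s\in[0,t]$, since $\bm{\xi}(s)\geq\bm{0}$ component-wise,
\[
\lvert\langle\bm{\eta},\bm{\xi}(s)\rangle\rvert \;\leq\; \langle\lvert\bm{\eta}\rvert,\bm{\xi}(s)\rangle,
\]
where $\lvert\bm{\eta}\rvert$ denotes the component-wise absolute value. Integrating in $s$ and interchanging the integral with the inner product yields
\[
\int_{0}^{t}\lvert\langle\bm{\eta},\bm{\xi}(s)\rangle\rvert\,\differential s \;\leq\; \big\langle\lvert\bm{\eta}\rvert,\,\bm{\zeta}(t)\big\rangle.
\]
Then by Cauchy--Schwarz and $\lVert\lvert\bm{\eta}\rvert\rVert_{2}=\lVert\bm{\eta}\rVert_{2}=1$, the right-hand side is at most $\lVert\bm{\zeta}(t)\rVert_{2}$. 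Multiplying by $2\mu$ gives the upper bound $2\mu\lVert\bm{\zeta}(t)\rVert_{2}$ on the diameter.

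Next I would exhibit a maximizer. Set $\bm{\eta}^{\star}:=\bm{\zeta}(t)/\lVert\bm{\zeta}(t)\rVert_{2}$. Componentwise integration of (\ref{defxi}) gives $\bm{\zeta}(t)=\big(t^{d}/d!,\,t^{d-1}/(d-1)!,\,\ldots,\,t\big)^{\top}$, which is strictly positive componentwise for $t>0$; hence $\bm{\eta}^{\star}>\bm{0}$ componentwise, so $\langle\bm{\eta}^{\star},\bm{\xi}(s)\rangle\geq 0$ throughout $[0,t]$ and the absolute value may be dropped. This collapses both inequalities used above to equalities:
\[
\int_{0}^{t}\lvert\langle\bm{\eta}^{\star},\bm{\xi}(s)\rangle\rvert\,\differential s = \langle\bm{\eta}^{\star},\bm{\zeta}(t)\rangle = \lVert\bm{\zeta}(t)\rVert_{2},
\]
proving that the supremum in (\ref{ConcaveProblemDiameterOfReachSet}) is attained and equal to $\lVert\bm{\zeta}(t)\rVert_{2}$. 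This yields (\ref{compactformDiamFormula}).

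Finally, the expanded formula (\ref{expandedformDiamFormula}) follows by substituting the explicit components of $\bm{\zeta}(t)$ computed above into $\lVert\bm{\zeta}(t)\rVert_{2}^{2}=\sum_{j=1}^{d}(t^{j}/j!)^{2}$. I do not expect any real obstacle: the only point that deserves care is verifying that the componentwise non-negativity of $\bm{\xi}(s)$, which lets us drop the absolute value at the candidate maximizer, is genuinely what makes Cauchy--Schwarz tight here. As a sanity check for $d=2$, the maximizer is proportional to $(t^{2}/2,\,t)^{\top}$, so $\theta^{\max}=\arctan(2/t)$, matching the dashed lines in Fig.~\ref{FigWidthReachSet}.
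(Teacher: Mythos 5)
Your proof is correct and follows the same overall strategy as the paper's: use the component-wise non-negativity of $\bm{\xi}(s)$ to bound the objective in (\ref{ConcaveProblemDiameterOfReachSet}) above by $\sum_{i=1}^{d}\bm{\zeta}_{i}(t)\lvert\bm{\eta}_{i}\rvert$, and then identify $\bm{\zeta}(t)/\lVert\bm{\zeta}(t)\rVert_{2}$ as the maximizer. The two differences are in the final optimization step and in how attainment is justified. The paper maximizes the weighted $\ell_{1}$-norm upper bound via a Lagrange multiplier argument and then substitutes the resulting $\bm{\eta}^{\max}$ back; you instead apply Cauchy--Schwarz to $\langle\lvert\bm{\eta}\rvert,\bm{\zeta}(t)\rangle$, which gives the bound $\lVert\bm{\zeta}(t)\rVert_{2}$ in one line. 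More importantly, you explicitly verify tightness: since $\bm{\zeta}(t)$ is strictly positive component-wise for $t>0$, the candidate $\bm{\eta}^{\star}$ has positive components, so $\langle\bm{\eta}^{\star},\bm{\xi}(s)\rangle\geq 0$ on $[0,t]$ and the first inequality collapses to an equality. This check is needed --- maximizing an upper bound only yields the maximum of the original objective if the bound is attained there --- and the paper leaves it implicit; indeed the paper's remark that the maximizer is determined ``upto plus-minus sign permutations among its components'' is true only for the surrogate $\ell_{1}$ objective, not for the original integral, where mixed sign patterns would break the equality $\lvert\langle\bm{\eta},\bm{\xi}(s)\rangle\rvert=\langle\lvert\bm{\eta}\rvert,\bm{\xi}(s)\rangle$. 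Your version is therefore slightly cleaner and logically more complete; the concluding computation of $\bm{\zeta}(t)$ and the $d=2$ sanity check match the paper exactly.
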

To illustrate Theorem \ref{ThmDiamIntegratorReachSet}, consider again the double integrator for which $\bm{\eta}\equiv (\cos\theta, \sin\theta)^{\top}$, $\theta\in\mathbb{S}^{1}$, and (\ref{ConcaveProblemDiameterOfReachSet}) becomes
\[2\mu\:\underset{\theta\in[0,2\pi)}{\max}\:\displaystyle\int_{0}^{t}\lvert s\cos\theta + \sin\theta\rvert\:\differential s.\]
In this case, $\bm{\zeta}(t) = (t^{2}/2, t)^{\top}$, and from (\ref{maximizereta}) in Appendix \ref{AppProofDiamThm}, we obtain the maximizers
\begin{align*}
\!\begin{pmatrix}
\!\cos\theta^{\max}\!\\
\!\sin\theta^{\max}\!	
\end{pmatrix}
\!= \!\begin{pmatrix}
 \!\dfrac{t^2/2}{\sqrt{t^{4}/4 + t^{2}}}\!\\
  \!\dfrac{t}{\sqrt{t^{4}/4 + t^{2}}}\!	
 \end{pmatrix} \Rightarrow \theta_{r}^{\max} = r\pi + \arctan\left(2/t\right),
\end{align*}
for $r=0,1$ (so as to make $\theta\in[0,2\pi)$). In Fig. \ref{FigWidthReachSet}, the location of these maximizers in the horizontal axis are depicted via dashed vertical lines. From (\ref{FinalDiamFormula}), the diameter of the double integrator reach set becomes $\mu t \sqrt{t^{2} + 4}$, which for our choice of parameters in Fig. \ref{FigWidthReachSet}, equals $89.4427$.

Similarly, the diameter of the triple integrator ($d=3$) reach set, from (\ref{expandedformDiamFormula}), equals $(\mu t/3)\sqrt{t^{4} + 9 t^{2} + 36}$.


\section{Conclusions}\label{SecConclusions}
In this paper, we took a close look at the convex geometry of the integrator reach sets. Using support function calculus, we established that the integrator reach set is a zonoid -- a limiting compact convex set of a sequence of zonotopes (Minkowski sum of line segments). This limit is defined in the (two-sided) Hausdorff metric. These ideas enabled us to derive closed-form formula for the volume and diameter of the integrator reach sets. Several examples are given to elucidate the results.


\section{Acknowledgments}
This research was partially supported by Chancellor's Fellowship from the University of California, Santa Cruz.


\appendix{}

\subsection{Integrator State Transition Matrix}\label{AppSTM}
The state matrix $\bm{A}$ for (\ref{IntegratorDyn}) is the upper shift matrix, and hence the state transition matrix $\exp(t\bm{A})$ is upper triangular with all ones in the diagonal, as given below.
\begin{lemma}\label{LemmaSTM}
For the $d\times d$ matrix $\bm{A}$ as in (\ref{StateMatrices}), we have
\begin{align}
\exp(t\bm{A})_{i,j} = \begin{cases}
 	\displaystyle\frac{t^{j-i}}{(j-i)!} & \text{for}\quad i<j,\\
 	1 & \text{for}\quad i=j,\\
 	0 & \text{for}\quad i>j,\\
 \end{cases}
\label{STM}	
\end{align}
where the indices $i,j=1,\hdots,d$. 	
\end{lemma}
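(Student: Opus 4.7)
The plan is to compute the powers $\bm{A}^{k}$ explicitly, recognize that $\bm{A}$ is nilpotent, and then sum the (finite) matrix exponential series term by term to match the three cases in (\ref{STM}).

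First, I would observe that by (\ref{StateMatrices}), the action of $\bm{A}$ on the standard basis is $\bm{A}\bm{e}_{1}=\bm{0}$ and $\bm{A}\bm{e}_{j+1}=\bm{e}_{j}$ for $j=1,\hdots,d-1$, so $\bm{A}$ is the (backward) shift operator on $\mathbb{R}^{d}$. Equivalently, $(\bm{A})_{i,j}=1$ if $j-i=1$ and zero otherwise. A straightforward induction on $k$ then shows
\begin{align*}
\left(\bm{A}^{k}\right)_{i,j} \;=\; \begin{cases} 1 & \text{if } j-i = k,\\ 0 & \text{otherwise,}\end{cases}
\end{align*}
for $0 \leq k \leq d-1$, so each power of $\bm{A}$ just pushes the diagonal of ones one step further into the upper triangle. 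In particular $\bm{A}^{d}=\bm{0}$, i.e., $\bm{A}$ is nilpotent of index $d$.

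Next, I would plug this into the convergent defining series for the matrix exponential,
\begin{align*}
\exp(t\bm{A}) \;=\; \sum_{k=0}^{\infty} \frac{t^{k}}{k!}\bm{A}^{k} \;=\; \sum_{k=0}^{d-1}\frac{t^{k}}{k!}\bm{A}^{k},
\end{align*}
where the truncation follows from $\bm{A}^{d}=\bm{0}$. Reading off the $(i,j)$-th entry using the expression for $(\bm{A}^{k})_{i,j}$ above, only the term $k=j-i$ contributes (and only when $j-i\geq 0$), yielding $\exp(t\bm{A})_{i,j} = t^{j-i}/(j-i)!$ for $j>i$, the value $1$ for $j=i$ (the $k=0$ term), and $0$ for $j<i$ (no value of $k\in\{0,\hdots,d-1\}$ matches). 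This is precisely (\ref{STM}).

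There is no real obstacle: the only mildly delicate point is bookkeeping the index shift in the induction that gives $(\bm{A}^{k})_{i,j}=\mathbf{1}_{\{j-i=k\}}$. Everything else is immediate from nilpotency and the power series definition.
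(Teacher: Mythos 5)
Your proof is correct and follows essentially the same route as the paper: both exploit the nilpotency of the upper shift matrix $\bm{A}$ to truncate the exponential series at $k=d-1$ and read off the entries. You simply make explicit the intermediate fact $(\bm{A}^{k})_{i,j}=\mathbf{1}_{\{j-i=k\}}$, which the paper leaves implicit in ``from which the result follows.''
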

\begin{proof}
Since $\bm{A}$ is nilpotent with order $d$ (i.e., $\bm{A}^{d}$ is zero matrix), hence $\exp(t\bm{A}) = \displaystyle\sum_{j=0}^{d-1}\dfrac{\left(t\bm{A}\right)^{j}}{j!}$, from which the result follows.	
\end{proof}

\subsection{Convergence of Compact Convex Sets}\label{AppConvergenceConvexSets}
Let $\{\mathcal{K}_{i}\}_{i\in\mathbb{N}}$ be a sequence of compact convex sets in $\mathbb{R}^{n}$. Define the two-sided Hausdorff distance $\delta_{{\rm{H}}}$ (which is a metric) between any two non-empty sets $\mathcal{P},\mathcal{Q}\subseteq\mathbb{R}^{d}$ as
\begin{align}
\delta_{{\rm{H}}}\left(\mathcal{P},\mathcal{Q}\right) := \max\bigg\{\underset{\bm{p}\in\mathcal{P}}{\sup}\:\underset{\bm{q}\in\mathcal{Q}}{\vphantom{\sup}\inf} \parallel \bm{p} - \bm{q} \parallel_{2},\nonumber\\ 
\underset{\bm{q}\in\mathcal{Q}}{\sup}\:\underset{\bm{p}\in\mathcal{P}}{\vphantom{\sup}\inf} \parallel \bm{p} - \bm{q} \parallel_{2}\bigg\}.
\label{DefHausdorff}	
\end{align}
For $\mathcal{P,Q}$ compact and convex, (\ref{DefHausdorff}) becomes
\begin{align}
\delta_{{\rm{H}}}\left(\mathcal{P},\mathcal{Q}\right) = \underset{\bm{\eta}\in\mathbb{S}^{d-1}}{\sup}\:\lvert h_{\mathcal{P}}(\bm{\eta}) - h_{\mathcal{Q}}(\bm{\eta})\rvert,
\label{HausdorffviaSptFn}	
\end{align}
where $\mathbb{S}^{d-1}$ denotes the Euclidean unit sphere imbedded in $\mathbb{R}^{d}$. The sequence $\{\mathcal{K}_{i}\}_{i\in\mathbb{N}}$ converges to a compact convex set $\mathcal{K}$ (loosely denoted as $\mathcal{K}_{i} \rightarrow \mathcal{K}$) iff $\delta_{{\rm{H}}}\left(\mathcal{K}_{i},\mathcal{K}\right) \rightarrow 0$ as $i\rightarrow\infty$. In fact, the collection of all compact convex sets equipped with the metric $\delta_{{\rm{H}}}$ constitutes a complete\footnote{i.e., every Cauchy sequence of sets is convergent in $\delta_{{\rm{H}}}$, and vice versa.} metric space. The following Lemma will be useful in proving Proposition \ref{PropositionSptFnIntegral} in Section \ref{SubsecSptFnPrelim}.
\begin{lemma}\label{LemmaSetConvergenceSptFnConvergence}
Let $\{\mathcal{K}_{i}\}_{i\in\mathbb{N}}$ be a sequence of compact convex sets in $\mathbb{R}^{n}$. Then, $\mathcal{K}_{i} \rightarrow \mathcal{K} \: \Leftrightarrow h_{\mathcal{K}_{i}}(\cdot) \rightarrow h_{\mathcal{K}}(\cdot)$.	
\end{lemma}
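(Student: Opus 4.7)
The plan is to reduce both directions to the identity (\ref{HausdorffviaSptFn}), which rewrites the Hausdorff distance between two compact convex sets as the sup-norm distance between their support functions on $\mathbb{S}^{d-1}$. With that in hand, convergence in $\delta_{\rm H}$ becomes precisely uniform convergence of the support functions on the unit sphere, so the lemma reduces to passing between uniform convergence on $\mathbb{S}^{d-1}$ and pointwise convergence on $\mathbb{R}^{d}$.

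For the forward direction $(\Rightarrow)$, I would start from $\delta_{\rm H}(\mathcal{K}_{i},\mathcal{K}) \to 0$, invoke (\ref{HausdorffviaSptFn}) to conclude $\sup_{\bm{\eta}\in\mathbb{S}^{d-1}}|h_{\mathcal{K}_{i}}(\bm{\eta}) - h_{\mathcal{K}}(\bm{\eta})| \to 0$, and then note that this immediately yields pointwise convergence on $\mathbb{S}^{d-1}$. Using the positive homogeneity in property (ii), every $\bm{y}\in\mathbb{R}^{d}\setminus\{\bm{0}\}$ can be written as $\alpha\bm{\eta}$ with $\alpha=\|\bm{y}\|_{2}>0$ and $\bm{\eta}\in\mathbb{S}^{d-1}$, so $h_{\mathcal{K}_{i}}(\bm{y}) = \alpha h_{\mathcal{K}_{i}}(\bm{\eta}) \to \alpha h_{\mathcal{K}}(\bm{\eta}) = h_{\mathcal{K}}(\bm{y})$, and the value at $\bm{y}=\bm{0}$ is trivially zero for all terms. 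This gives $h_{\mathcal{K}_{i}}(\cdot) \to h_{\mathcal{K}}(\cdot)$ pointwise on $\mathbb{R}^{d}$.

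For the reverse direction $(\Leftarrow)$, I am given pointwise convergence on $\mathbb{R}^{d}$ and need to upgrade this to uniform convergence on $\mathbb{S}^{d-1}$ so that (\ref{HausdorffviaSptFn}) can be applied. The key observation is that each $h_{\mathcal{K}_{i}}$ is a finite-valued convex function on $\mathbb{R}^{d}$ by property (i), and the limit $h_{\mathcal{K}}$ is likewise finite-valued and convex (since $\mathcal{K}$ is compact). I would then invoke the classical theorem on pointwise convergence of convex functions (e.g., Rockafellar, \emph{Convex Analysis}, Theorem 10.8): if a sequence of finite convex functions on an open convex set converges pointwise to a finite convex function, then the convergence is uniform on every compact subset. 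Applying this with the compact set $\mathbb{S}^{d-1}\subset\mathbb{R}^{d}$ yields $\sup_{\bm{\eta}\in\mathbb{S}^{d-1}}|h_{\mathcal{K}_{i}}(\bm{\eta}) - h_{\mathcal{K}}(\bm{\eta})| \to 0$, and (\ref{HausdorffviaSptFn}) then gives $\delta_{\rm H}(\mathcal{K}_{i},\mathcal{K}) \to 0$, i.e., $\mathcal{K}_{i} \to \mathcal{K}$.

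The main obstacle is the $(\Leftarrow)$ step, where the jump from pointwise to uniform convergence is not automatic and relies on the convexity of the support functions. Everything else is bookkeeping via the homogeneity and sublinearity properties listed in Section \ref{SubsecSptFnPrelim}, together with the explicit formula (\ref{HausdorffviaSptFn}).
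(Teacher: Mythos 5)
Your proposal is correct and follows essentially the same route as the paper: both directions reduce to the identity (\ref{HausdorffviaSptFn}), which equates the Hausdorff distance with the sup-norm distance of the support functions on $\mathbb{S}^{d-1}$. The only difference is that the paper's one-line proof implicitly reads $h_{\mathcal{K}_{i}}(\cdot)\rightarrow h_{\mathcal{K}}(\cdot)$ as uniform convergence on the sphere, whereas you interpret it as pointwise convergence on $\mathbb{R}^{d}$ and therefore supply the additional (correct) step --- uniform convergence on compacta for pointwise-convergent finite convex functions --- needed to close the reverse direction.
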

\begin{proof}
We know that $\mathcal{K}_{i} \rightarrow \mathcal{K}$ iff $\delta_{{\rm{H}}}\left(\mathcal{K}_{i},\mathcal{K}\right)\rightarrow 0$. From (\ref{HausdorffviaSptFn}), the latter is equivalent to $h_{\mathcal{K}_{i}}(\cdot) \rightarrow h_{\mathcal{K}}(\cdot)$.	
\end{proof}
We will also need the following.
\begin{lemma}\label{LemmaSetConvergenceVolConvergence}
Let $\{\mathcal{K}_{i}\}_{i\in\mathbb{N}}$ be a sequence of compact convex sets in $\mathbb{R}^{d}$. Let $\vol(\cdot)$ denote the $d$-dimensional volume. If $\mathcal{K}_{i} \rightarrow \mathcal{K}$, then $\vol\left(\mathcal{K}_{i}\right) \rightarrow \vol\left(\mathcal{K}\right)$ as $i\rightarrow\infty$.	
\end{lemma}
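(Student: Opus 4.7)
The plan is to exploit the \emph{Steiner formula}, which expresses $\vol(\mathcal{K} \dotplus \varepsilon B)$ (with $B$ the closed Euclidean unit ball in $\mathbb{R}^d$) as a polynomial in $\varepsilon \geq 0$ whose constant term is $\vol(\mathcal{K})$. The convergence of volumes will then follow by sandwiching each $\mathcal{K}_i$ between $\mathcal{K}$ and a parallel body of $\mathcal{K}$, and conversely.

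First I would translate Hausdorff convergence into a two-sided inclusion. From definition (\ref{DefHausdorff}), $\delta_{\rm H}(\mathcal{K}_i,\mathcal{K}) \to 0$ is equivalent to the statement that, for every $\varepsilon > 0$, there exists $N(\varepsilon)$ such that for all $i \geq N(\varepsilon)$,
\[\mathcal{K}_i \subseteq \mathcal{K} \dotplus \varepsilon B \quad \textrm{and} \quad \mathcal{K} \subseteq \mathcal{K}_i \dotplus \varepsilon B.\]
Monotonicity of the Lebesgue measure on compact convex sets immediately yields $\vol(\mathcal{K}_i) \leq \vol(\mathcal{K} \dotplus \varepsilon B)$ and $\vol(\mathcal{K}) \leq \vol(\mathcal{K}_i \dotplus \varepsilon B)$.

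Next, I would apply the Steiner formula to both $\mathcal{K}$ and $\mathcal{K}_i$: for any compact convex $\mathcal{C} \subset \mathbb{R}^d$,
\[\vol(\mathcal{C} \dotplus \varepsilon B) = \vol(\mathcal{C}) + \sum_{j=1}^{d}\binom{d}{j} W_{j}(\mathcal{C})\, \varepsilon^{j},\]
where $W_{j}$ are the quermassintegrals, known to be monotone under set inclusion. Because the Hausdorff-convergent sequence $\{\mathcal{K}_i\}$ is bounded, there exists $R>0$ with $\mathcal{K},\mathcal{K}_i \subseteq R B$ for all sufficiently large $i$. Monotonicity then gives $W_{j}(\mathcal{K}_i) \leq W_{j}(R B) =: M_{j}$ and $W_{j}(\mathcal{K}) \leq M_j$, a bound independent of $i$. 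Substituting,
\[|\vol(\mathcal{K}_i) - \vol(\mathcal{K})| \leq \sum_{j=1}^{d}\binom{d}{j} M_{j}\, \varepsilon^{j},\]
and the right-hand side tends to $0$ as $\varepsilon \downarrow 0$. Since for every $\varepsilon>0$ this holds for all $i \geq N(\varepsilon)$, we conclude $\vol(\mathcal{K}_i) \to \vol(\mathcal{K})$.

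The main obstacle here is the uniformity of the Steiner-polynomial coefficients across $i$: without a uniform bound on the $W_{j}(\mathcal{K}_i)$, the polynomial remainders above could blow up as $i\to\infty$. That is precisely why the boundedness step (collecting $\{\mathcal{K}_i\}$ inside a common ball $RB$) is essential, and it is exactly Hausdorff convergence to a compact $\mathcal{K}$ that supplies it. An alternative route would bypass Steiner by writing $\vol$ as an integral over $\mathbb{S}^{d-1}$ of the support function (as in (\ref{volviasptfn})) and invoking Lemma~\ref{LemmaSetConvergenceSptFnConvergence}; this looks tempting but is technically heavier, because one must control convergence of the surface-area measures $\differential S_{\mathcal{K}_i}$ as well, so the Steiner-formula argument is cleaner.
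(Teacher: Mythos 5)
Your proof is correct, but it takes a genuinely different route from the paper's. The paper disposes of this lemma in one line by citing the known continuity of the volume functional on the metric space of convex bodies equipped with $\delta_{\mathrm{H}}$ (\cite[p.~55]{schneider2014book}), together with the equivalence between Hausdorff convergence and uniform convergence of support functions on $\mathbb{S}^{d-1}$ (Lemma~\ref{LemmaSetConvergenceSptFnConvergence}); no argument is actually carried out. You instead supply a self-contained proof: you convert $\delta_{\mathrm{H}}(\mathcal{K}_i,\mathcal{K})\le\varepsilon$ into the two-sided inclusion $\mathcal{K}_i\subseteq\mathcal{K}\dotplus\varepsilon B$, $\mathcal{K}\subseteq\mathcal{K}_i\dotplus\varepsilon B$, expand both outer parallel bodies by the Steiner formula, and control the error terms uniformly in $i$ via monotonicity of the quermassintegrals and the fact that a convergent sequence is eventually contained in a common ball $RB$. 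All of these steps are sound (the uniform bound $W_j(\mathcal{K}_i)\le W_j(RB)$ is exactly the point where a naive version of the argument would fail, and you correctly flag and close that gap), so your write-up is in effect a proof of the very continuity theorem the paper cites. What the paper's approach buys is brevity and a clean separation of concerns; what yours buys is that the lemma becomes verifiable without consulting \cite{schneider2014book}, at the cost of importing the Steiner formula and monotonicity of quermassintegrals as (standard but nontrivial) black boxes. Your closing remark is also apt: the route through (\ref{volviasptfn}) would additionally require weak convergence of the surface area measures $\differential S_{\mathcal{K}_i}$, which is indeed heavier machinery than either the paper's citation or your sandwich argument.
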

\begin{proof}
Follows from continuity of the volume functional \cite[p. 55]{schneider2014book}, and uniform convergence of the support functions of corresponding sets.
\end{proof}

\subsection{Proof of Theorem \ref{ThmVolIntegratorReachSet}}\label{AppProofVolThm}
\begin{proof}[\unskip\nopunct]
For a given $d$-tuple $\{i_{1}, i_{2}, \hdots, i_{d}\}$, where $0 \leq i_{1} < i_{2} < \hdots, < i_{d} \leq n$, let 
\[\Delta(i_{1},i_{2},\hdots,i_{d}) := \det\left(\bm{\xi}(t_{i_{1}}) \vert \bm{\xi}(t_{i_{2}}) \vert \hdots \vert \bm{\xi}(t_{i_{d}}\right))\] 
denote the corresponding summand determinant in (\ref{SimplifyVolRHS}). In the matrix list notation, let us use the vertical bars $\lvert \cdot \rvert$ to mean \emph{the absolute value of the determinant}. From (\ref{defxi}), $\Delta(i_{1},i_{2},\hdots,i_{d})$ equals
\begin{align}
&\begin{vmatrix}
 \dfrac{\left(i_{1}t/n\right)^{d-1}}{(d-1)!} & \dfrac{\left(i_{2}t/n\right)^{d-1}}{(d-1)!} & \hdots & \dfrac{\left(i_{d}t/n\right)^{d-1}}{(d-1)!}\\
  & & &\\
  \dfrac{\left(i_{1}t/n\right)^{d-2}}{(d-2)!} & \dfrac{\left(i_{2}t/n\right)^{d-2}}{(d-2)!} & \hdots & \dfrac{\left(i_{d}t/n\right)^{d-2}}{(d-2)!}\\ 
   &&&\\ 
 \vdots & \vdots & \vdots & \vdots\\
  &&&\\
 i_{1}t/n & i_{2}t/n & \hdots & i_{d}t/n\\
 &&&\\
 1 & 1 & \hdots & 1
 \end{vmatrix}\nonumber\\
&= \dfrac{(t/n)^{1 + 2 + \hdots + (d-1)}}{1!\times2!\times\hdots\times(d-1)! }\begin{vmatrix}
i_{1}^{d-1} & i_{2}^{d-1} & \hdots & i_{d}^{d-1}\\
  & & &\\
 i_{1}^{d-2} & i_{2}^{d-2} & \hdots & i_{d}^{d-2}\\ 
   &&&\\ 
 \vdots & \vdots & \vdots & \vdots\\
  &&&\\
 i_{1} & i_{2} & \hdots & i_{d}\\
 &&&\\
 1 & 1 & \hdots & 1
 \end{vmatrix}, \nonumber\\
&=  \dfrac{(t/n)^{d(d-1)/2}}{\displaystyle\prod_{k=1}^{d-1}k!}\begin{vmatrix}
 1 & 1 & \hdots & 1\\
  & & &\\
   i_{1} & i_{2} & \hdots & i_{d}\\ 
   &&&\\ 
 \vdots & \vdots & \vdots & \vdots\\
  &&&\\  
 i_{1}^{d-2} & i_{2}^{d-2} & \hdots & i_{d}^{d-2}\\
 &&&\\
i_{1}^{d-1} & i_{2}^{d-1} & \hdots & i_{d}^{d-1}\\
 \end{vmatrix},
 \label{DetStructure}
\end{align}
wherein the last but one step brought the multiple for each row outside the determinant, and the last step rearranged the rows (without affecting sign since we are dealing with the absolute value of the determinant). 

Next, notice that the determinant in (\ref{DetStructure}) is the well-known \emph{Vandermonde determinant} that equals \cite[p. 37]{hornjohnson2012}
\begin{align}
\displaystyle\prod_{1 \leq \alpha < \beta \leq d} \left(i_{\beta} - i_{\alpha}\right).
\label{VandermondeDet}	
\end{align}
Combining (\ref{SimplifyVolRHS}), (\ref{DetStructure}) and (\ref{VandermondeDet}), we arrive at (\ref{FinalVolFormula}).
\end{proof}

\subsection{Proof of Theorem \ref{ThmDiamIntegratorReachSet}}\label{AppProofDiamThm}
\begin{proof}[\unskip\nopunct]
Let $\bm{\xi}_{i}(s)$ denote the $i$-th component of the vector $\bm{\xi}(s)$ given by (\ref{defxi}), where $i=1,2,\hdots,d$. We notice that $\lvert\sum_{i=1}^{d}\bm{\eta}_{i}\bm{\xi}_{i}(s)\rvert \leq \sum_{i=1}^{d}\lvert\bm{\eta}_{i}\bm{\xi}_{i}(s)\rvert = \sum_{i=1}^{d}\lvert\bm{\eta}_{i}\rvert\bm{\xi}_{i}(s)$ (since the vector $\bm{\xi}(s)$ is elementwise nonnegative) for any $s\in[0,t]$. Therefore, 	
\begin{align}
\displaystyle\int_{0}^{t}\!\bigg\lvert\displaystyle\sum_{i=1}^{d}\bm{\eta}_{i}\bm{\xi}_{i}(s)\bigg\rvert\:\differential s \leq \displaystyle\sum_{i=1}^{d} \lvert\bm{\eta}_{i}\rvert \!\displaystyle\int_{0}^{t}\!\bm{\xi}_{i}(s)\:\differential s = \displaystyle\sum_{i=1}^{d}\bm{\zeta}_{i}(t)\lvert\bm{\eta}_{i}\rvert.
\label{DiamProofIneq}	
\end{align}
By standard Lagrange multiplier argument, maximizing the weighted $\ell_{1}$-norm $\displaystyle\sum_{i=1}^{d}\bm{\zeta}_{i}(t)\lvert\bm{\eta}_{i}\rvert$ appearing in the RHS of (\ref{DiamProofIneq}) w.r.t. $\bm{\eta}$, subject to $\bm{\eta}^{\top}\bm{\eta}=1$, yields the maximizer
\begin{align}
\bm{\eta}^{\max} = \pm \dfrac{\bm{\zeta}(t)}{\parallel\bm{\zeta}(t)\parallel_{2}},
\label{maximizereta}	
\end{align}
i.e., the unit vector associated with $\bm{\zeta}(t)$ upto plus-minus sign permutations among its components.

Substituting $\bm{\eta}^{\max}$ from (\ref{maximizereta}) back in (\ref{DiamProofIneq}) yields the maximal value attainable by the integral in (\ref{ConcaveProblemDiameterOfReachSet}) as $\parallel\bm{\zeta}(t)\parallel_{2}$. We thus obtain
\[{\rm{diam}}\left(\mathcal{R}\left(\{\bx_{0}\},t\right)\right) = 2\mu \parallel \bm{\zeta}(t) \parallel_{2},\]
which is indeed (\ref{compactformDiamFormula}). The expression (\ref{expandedformDiamFormula}) results from recalling that the components of $\bm{\zeta}(t)$, by definition, are
\[\bm{\zeta}_{i}(t) \! := \!\displaystyle\int_{0}^{t}\!\bm{\xi}_{i}(s)\:\differential s = \!\displaystyle\int_{0}^{t}\!\!\dfrac{s^{d-i}}{(d-i)!}\:\differential s = \dfrac{t^{d-i+1}}{(d-i+1)!},\]
where $i=1,2,\hdots,d$. Therefore,
\[\|\bm{\zeta}(t)\|_{2} = \left(\sum_{j=1}^{d}\left(t^{j}/j!\right)^{2}\right)^{\!\!1/2},\] 
and the proof is complete.
\end{proof}


\begin{thebibliography}{99}

\bibitem{aumann1965integrals}
R.J.~Aumann, ``Integrals of set-valued functions", \emph{Journal of Mathematical Analysis and Applications}, Vol. 12, No. 1, pp. 1--12, 1965.

\bibitem{varaiya2000reach}
P.~Varaiya, ``Reach set computation using optimal control", in M.K.~Inan, R.P.~Kurshan (eds), \emph{Verification of Digital and Hybrid Systems}, NATO ASI Series (Series F: Computer and Systems Sciences), Springer, Vol 170, pp. 323--331, 2000.

\bibitem{maksarov1996state}
D.G.~Maksarov, and J.P.~Norton, ``State bounding with ellipsoidal set description of the uncertainty", \emph{International Journal of Control}, Vol. 65, no. 5, pp. 847--866, 1996. 

\bibitem{KurzhanskiValyiBook}
A.B.~Kurzhanski, and I.~V\'{a}lyi, \emph{Ellipsoidal Calculus for Estimation and Control}, Systems and Control: Foundations and Applications, Birkh\"{a}user, Boston, and the International Institute for Applied Systems Analysis, 1997.


\bibitem{varaiyabook}
A.B.~Kurzhanski, and P. Varaiya, \emph{Dynamics and Control of Trajectory Tubes}. Springer International Publishing, Switzerland, 2014.\\
\url{https://tinyurl.com/yxesbk8k} 

\bibitem{fainekos2009temporal}
G.E.~Fainekos, A.~Girard, H.~Kress-Gazit, and G.J.~Pappas, ``Temporal logic motion planning for dynamic robots", \emph{Automatica}, Vol. 45, No. 2, pp. 343--352, 2009. 

\bibitem{durieu2001multi}
C.~Durieu, E.~Walter, and B.~Polyak, ``Multi-input multi-output ellipsoidal state bounding", \emph{Journal of Optimization Theory and Applications}, Vol. 111, No. 2, pp. 273--303, 2001.




\bibitem{althoff2007reachability}
M.~Althoff, O.~Stursberg, and M.~Buss, ``Reachability analysis of linear systems with uncertain parameters and inputs", \emph{Proceedings of the 46th IEEE Conference on Decision and Control (CDC)}, pp. 726--732, 2007. 


\bibitem{halder2018parameterized}
A.~Halder, ``On the parameterized computation of minimum volume outer ellipsoid of Minkowski sum of ellipsoids", \emph{Proceedings of the IEEE Conference on Decision and Control (CDC)}, pp. 4040--4045, 2018. 

\bibitem{halder2018smallest}
A.~Halder, ``Smallest ellipsoid containing $p$-sum of ellipsoids with application to reachability analysis", \emph{arXiv preprint arXiv:1806.07621}, 2018.\\
\url{https://arxiv.org/pdf/1806.07621.pdf}


\bibitem{fliess1995flatness}
M.~Fliess, J.~L{\'e}vine, P.~Martin, and P.~Rouchon, ``Flatness and defect of non-linear systems: introductory theory and examples", \emph{International Journal of Control}, Vol. 61, No. 6, pp. 1327--1361, 1995.

\bibitem{murray1995}
R.M.~Murray, M.~Rathinam, andd W.~Sluis, ``Differential flatness of mechanical control systems: a catalog of prototype systems", \emph{in ASME International Mechanical Engineering Congress and Exposition}, 1995.

\bibitem{fliess1999lie}
M.~Fliess, J.~L{\'e}vine, P.~Martin, and P.~Rouchon, ``A Lie-B\"{a}cklund approach to equivalence and flatness of nonlinear systems", \emph{IEEE Transactions on Automatic Control}, Vol. 44, No. 5, pp. 922-937, 1999.

\bibitem{nieuwstadt1998differential} 
M.~Van Nieuwstadt, M.~Rathinam, and R.M.~Murray, ``Differential flatness and absolute equivalence of nonlinear control systems", \emph{SIAM Journal on Control and Optimization}, Vol. 36, No. 4, pp. 1225-1239, 1998.



\bibitem{witsenhausen1968}
H.S.~Witsenhausen, ``Sets of possible states of linear systems given
perturbed observations", \emph{IEEE Transactions on Automatic Control}, Vol. 13, No. 5, pp. 556--558, 1968. 

\bibitem{guernic2010reachability}
C.~Le Guernic, and A.~Girard, ``Reachability analysis of linear systems using support functions", \emph{Nonlinear Analysis: Hybrid Systems}, Vol. 4, No. 2, pp. 250--262, 2010.



\bibitem{rockafeller1970convex}
R.T.~Rockafeller, \emph{Convex Analysis}, Princeton Landmarks in Mathematics, Princeton University Press, 1970.


\bibitem{girard2008efficient}
A.~Girard, and C.~Le Guernic, ``Efficient reachability analysis of linear systems using support functions", \emph{IFAC Proceedings}, Vol. 41, No. 2, pp. 8966--8971, 2008.


\bibitem{lutwak1993the}
E.~Lutwak, ``The Brunn-Minkowski-Firey theory I: mixed volumes and the Minkowski problem", \emph{Journal of Differential Geometry
}, Vol. 38, No. 1, pp. 131--150, 1993.

\bibitem{schneider2014book}
R.~Schneider, \emph{Convex Bodies: the Brunn-Minkowski Theory}, Encyclopedia of Mathematics and Its Applications, No. 151, Cambridge University Press, 2014.

\bibitem{boydbook}
S.~Boyd, and L.~Vanderberghe, \emph{Convex Optimization}, Cambridge University Press, 2004.



\bibitem{bolker1969class}
E.D.~Bolker, ``A class of convex bodies", \emph{Transactions of the American Mathematical Society}, Vol. 145, pp. 323--345, 1969.

\bibitem{mcmullen1971on}
P.~McMullen, ``On zonotopes", \emph{Transactions of the American Mathematical Society}, Vol. 159, 1971.

\bibitem{shephard1974combinatorial}
G.C.~Shephard, ``Combinatorial properties of associated zonotopes", \emph{Canadian Journal of Mathematics}, Vol. 26, no. 2, pp. 302--321, 1974. 

\bibitem{schneider1983zonoids}
R.Schneider, and W.~Weil, ``Zonoids and related topics", in \emph{Convexity and its Applications}, pp. 296--317, 1983.

\bibitem{witsenhausen1978support}
H.S.~Witsenhausen, ``A support characterization of zonotopes", \emph{Mathematika}, Vol. 25, No. 1, pp. 13--16, 1978.

\bibitem{ziegler2012lectures}
G.M.~Ziegler, \emph{Lectures on Polytopes}, Vol. 152, Springer Science \& Business Media, 2012. 


\bibitem{gianfortone2018thesis}
L.M.~Gianfortone, \emph{Ellipsoidal Algorithm for Fast Computation of Reachable Tubes}, MS Thesis in Scientific Computing and Applied Mathematics, University of California, Santa Cruz, 2018.\\
\url{https://escholarship.org/uc/item/67p1s5fv}

\bibitem{hornjohnson2012}
R.A.~Horn, and C.R.~Johnson, \emph{Matrix Analysis}, Cambridge University Press, 2nd ed., 2012.

\end{thebibliography}
\end{document}